%%%%%%%%%%%%%%%%%%%%%%%%%%%%%%%%%%%%%%%%%%%%%%%%%%%%%%%%%%%%%%
% version submitted to Journal of Geometry and Physics  after making the revision suggested by referee%
\documentclass[12pt,reqno]{amsart}

\setlength{\textheight}{23.1cm}
\setlength{\textwidth}{16cm}
\setlength{\topmargin}{-0.8cm}
\setlength{\parskip}{0.3\baselineskip}
\hoffset=-1.4cm

\usepackage{amsmath,amsthm,amssymb}

\usepackage[all]{xy}

\newtheorem{thm}[subsection]{Theorem}
\newtheorem{prop}[subsection]{Proposition}
\newtheorem{lem}[subsection]{Lemma}

\theoremstyle{definition}
\newtheorem{Def}[subsection]{Definition}
\newtheorem{rem}[subsection]{Remark}

\newcommand{\PP}{{\mathbb P}}
\newcommand{\OOO}{{\mathcal O}}

\numberwithin{equation}{section}

\begin{document}

\author[I. Biswas]{Indranil Biswas}

\address{Department of Mathematics, Shiv Nadar University, NH91, Tehsil Dadri,
Greater Noida, Uttar Pradesh 201314, India}

\email{indranil.biswas@snu.edu.in, indranil29@gmail.com}

\author[F. Laytimi]{Fatima Laytimi}

\address {Math\'ematiques - b\^{a}t. M2, Universit\'e Lille 1,
F-59655 Villeneuve d'Ascq Cedex, France}

\email {fatima.laytimi@univ-lille.fr}

\author[D.S. Nagaraj]{D. S. Nagaraj}

\address{Indian Institute of science education and research, Tirupati, Srinivasapuram-Jangalapalli Village, 
Panguru (G.P) Yerpedu Mandal, Tirupati - 517619, Chittoor Dist., Andhra Pradesh India} 
\email{dsn@labs.iisertirupati.ac.in}

\author[W. Nahm]{Werner Nahm}
\address{Dublin Institute for Advanced Studies, 10 Burlington Road,
Dublin 4, Ireland}
\email{wnahm@stp.dias.ie}

\subjclass[2010]{14F17, 14J60}

\keywords{Anti-ample bundle, nef vector bundle, big vector bundle}

\title{On anti-ample vector bundles and nef and big vector bundles}

\date{}

\begin{abstract}
We prove that the direct image of an anti-ample vector bundle is anti-ample under any finite flat morphism of 
non-singular projective varieties. In the second part we prove some properties of big and nef vector bundles. 
In particular it is shown that the tensor product of a nef vector bundle with a nef and big vector bundle is again 
nef and big. This generalizes a result of Schneider.
\end{abstract}

\maketitle

\section{Introduction} \setcounter{page}{1}

All the varieties considered in this article are defined over the field $\mathbb C$ of complex numbers.

Let $\pi\,:\, X \,\longrightarrow\, Y$ be a finite flat morphism between non-singular projective varieties.
If $E$ is an ample vector bundle on $X,$ in general the direct image vector bundle 
$\pi_* E$ need not be ample. However we prove that the dual of an ample bundle behaves well under direct image.
More precisely, the following is proved.

\begin{thm}\label{main1} Let $\pi\,:\, X \,\longrightarrow\, Y$ be a finite surjective morphism between non-singular 
projective varieties. 
If $E$ is an anti-ample (respectively, anti-nef) vector bundle on $X,$ then the direct image bundle 
$\pi_*E$ is an anti-ample (respectively, anti-nef) vector bundle on $Y$.
\end{thm}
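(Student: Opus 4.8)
The plan is to reduce the statement to a positivity question on $X$ itself and then induct on the degree $d$ of $\pi$. I will use two facts freely: for a finite surjective morphism a bundle $W$ on $Y$ is ample (respectively, nef) if and only if $\pi^{\ast}W$ is ample (respectively, nef); and the classes of ample and of nef bundles are closed under extensions. Dualizing, the same descent and extension--closedness hold for anti-ample and anti-nef bundles. Applying the descent criterion to $W=\pi_{\ast}E$, the theorem becomes equivalent to the assertion that $\pi^{\ast}\pi_{\ast}E$ is anti-ample (respectively, anti-nef) on $X$.

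Next I would exploit the counit of adjunction. Since $\pi$ is affine, the natural map $\pi^{\ast}\pi_{\ast}E\to E$ is surjective, giving a short exact sequence
\[
0\longrightarrow K\longrightarrow \pi^{\ast}\pi_{\ast}E\longrightarrow E\longrightarrow 0 .
\]
As $E$ is anti-ample (respectively, anti-nef) by hypothesis, the extension--closedness noted above reduces everything to proving that $K$ has the same property. To understand $K$ I would pass to the fibre product $Z=X\times_{Y}X$ with its projections $p_{1},p_{2}\colon Z\to X$. Flat base change (using flatness of $\pi$) gives $\pi^{\ast}\pi_{\ast}E\cong (p_{1})_{\ast}p_{2}^{\ast}E$, and under this identification the counit is restriction to the diagonal $\Delta\cong X$; hence $K\cong (p_{1})_{\ast}\!\bigl(p_{2}^{\ast}E\otimes\mathcal{I}_{\Delta}\bigr)$, where $\mathcal{I}_{\Delta}$ is the ideal sheaf of $\Delta$ in $Z$. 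Over the \'etale locus of $\pi$ the diagonal is a connected component, $Z=\Delta\sqcup Z_{1}$ with $Z_{1}\to X$ finite flat of degree $d-1$, and there $K$ is the direct image of the anti-ample (respectively, anti-nef) bundle $p_{2}^{\ast}E$ under a cover of strictly smaller degree. This is exactly the shape needed to induct on $d$, the case $d=1$ being trivial.

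The main obstacle is the behaviour over the ramification locus of $\pi$. There $X\times_{Y}X$ is singular and non-reduced along $\Delta$, the diagonal is no longer a connected component, and $\mathcal{I}_{\Delta}$ is not the structure sheaf of a smooth finite flat cover; consequently the clean identification of $K$ as the direct image of an anti-ample bundle under a degree $d-1$ morphism of nonsingular varieties breaks down, and with it the naive induction. To overcome this I would replace the residual scheme by the normalization of the relevant component of $X\times_{Y}X$ and track how anti-ampleness and anti-nefness pull back along the resulting finite maps and push forward again; since the pullback of an anti-ample (respectively, anti-nef) bundle by any morphism retains its sign, the real work is comparing the pushforward from the normalization with $K$ along the exceptional locus. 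For the anti-nef half one can instead bypass $Z$ entirely: anti-nefness is tested on curves, so for each $g\colon C\to Y$ from a smooth projective curve I would base change to $C'=C\times_{Y}X$, use $g^{\ast}\pi_{\ast}E\cong\pi'_{\ast}(g'^{\ast}E)$, and deduce from the anti-nefness of $g'^{\ast}E$ together with the adjunction map $\pi'^{\ast}M\to g'^{\ast}E$ that every line subbundle $M$ of $g^{\ast}\pi_{\ast}E$ has non-positive degree, the degree being multiplied by $\deg\pi'$ under pullback. Combining this curve argument for the nef case with the ramification analysis, controlled if necessary through the relative dualizing sheaf via $(\pi_{\ast}E)^{\vee}\cong\pi_{\ast}(E^{\vee}\otimes\omega_{X/Y})$, should yield both assertions.
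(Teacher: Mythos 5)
Your proposal is a program rather than a proof, and the part you leave unexecuted is exactly the hard case. The preparatory reductions are individually correct: ampleness and nefness descend along finite surjective pullbacks, both classes are closed under extensions, the counit gives $0\to K\to\pi^*\pi_*E\to E\to 0$ with $K\cong (p_1)_*(p_2^*E\otimes\mathcal{I}_\Delta)$, and over the \'etale locus the diagonal splits off, so an induction on $\deg\pi$ would run if $\pi$ were \'etale. But for the ramified case you offer only intentions (``track'', ``compare'', ``should yield''), and your curve-test bypass for the anti-nef half has the same hole: if $f(C)$ lies inside the branch locus of $\pi$, then $C'=C\times_YX$ is \emph{generically non-reduced} (for a double cover with branch $\{s=0\}$, the equation pulls back to $t^2=0$ on $C$), and the adjoint map $\pi'^*M\to g'^*E$ of a line subbundle $M\subset g^*\pi_*E$ can be nonzero while vanishing identically on the reduction, landing in the nilpotent part; a nonzero map into an anti-nef bundle on a non-reduced curve gives no degree bound, and there is no smooth curve over $X$ on which to read one off. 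For comparison, the paper takes a genuinely different route --- a curve-to-curve lemma, base change to $C\times_YX$, passage to its normalization $Z$, the subsheaf relation $q_*p^*E\subset(q\circ\varphi)_*\varphi^*p^*E$, and $\mathbb{Q}$-twisting for the ample case --- but it stumbles at the very same point: when $C\times_YX$ is non-reduced the unit $p^*E\to\varphi_*\varphi^*p^*E$ has a kernel of full rank, so that inclusion fails (in the example below the left side has rank $2$ and the right side rank $1$).

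Moreover, this gap cannot be closed, because the statement itself fails in the presence of ramification in dimension at least $2$. Let $Y=\mathbb{F}_2$ with negative section $\sigma$ ($\sigma^2=-2$) and fiber $f$, let $\sigma'\in|\sigma+2f|$ be the positive section disjoint from $\sigma$, and let $\pi:X\to Y$ be the double cover branched along the smooth even divisor $B=\sigma+\sigma'\sim 2(\sigma+f)$; concretely $X\cong\mathbb{F}_1$ with $\pi$ given fiberwise by $[s:t]\mapsto[s^2:t^2]$. Then $E=\mathcal{O}_X$ is anti-nef, but $\pi_*\mathcal{O}_X=\mathcal{O}_Y\oplus\mathcal{O}_Y(-\sigma-f)$ is not anti-nef, since its dual has the summand $\mathcal{O}_Y(\sigma+f)$ and $(\sigma+f)\cdot\sigma=-1<0$. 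Likewise $E=\pi^*\mathcal{O}_Y(-\sigma-3f)$ is anti-ample ($\sigma+3f$ is ample on $\mathbb{F}_2$), yet $\pi_*E=\mathcal{O}_Y(-\sigma-3f)\oplus\mathcal{O}_Y(-2\sigma-4f)$ is not anti-ample because $(2\sigma+4f)\cdot\sigma=0$. The duality you invoke at the end, $(\pi_*E)^\vee\cong\pi_*(E^\vee\otimes\omega_{X/Y})$, explains the failure rather than repairing it: $\omega_{X/Y}=\mathcal{O}_X(R)$ with $R$ the ramification divisor, which is effective but need not be nef (here $R$ contains the $(-1)$-curve of $\mathbb{F}_1$), and it is exactly this class that pushforward inserts into $(\pi_*E)^\vee$. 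So no completion of your outline (or of the paper's argument) is possible without an extra hypothesis, such as $\pi$ \'etale or nefness of the branch divisor; the theorem is genuinely true only in the curve case, where effective divisors are automatically nef --- which is why both your curve reduction and the paper's Lemma 2.2 are sound there.
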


It may be mentioned that in general direct image of a non anti-ample bundle may be anti-ample (see Remark \ref{Rem1}).

In the second part we establish some basic properties of big vector bundles and also nef and big vector bundles.

\begin{thm}\label{main2} 
If $E$ and $F$ are big vector bundles on a projective variety $X,$ then the tensor product $E\otimes F$ is also big.
\end{thm}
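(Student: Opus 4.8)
The plan is to reduce the assertion to a single cohomological criterion for bigness and then to produce one suitably twisted global section by multiplication. Write $n=\dim X$, and let $\rho_E\colon\PP(E)\to X$, $\rho_F\colon\PP(F)\to X$, $\rho\colon\PP(E\otimes F)\to X$ denote the projective bundles, normalized so that $\rho_{G,*}\OOO_{\PP(G)}(m)=S^mG$; thus $G$ big means $\OOO_{\PP(G)}(1)$ is big.

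The central step is to establish the following characterization: a vector bundle $G$ on $X$ is big if and only if there exist an ample line bundle $A$ on $X$ and an integer $m>0$ with $H^0(X,\,S^mG\otimes A^{-1})\neq 0$. For the ``only if'' direction I would apply Kodaira's lemma on $\PP(G)$: since $\OOO_{\PP(G)}(1)$ is big and $\OOO_{\PP(G)}(1)\otimes\rho_G^*A^{\otimes n}$ is ample for $n\gg0$ (a relatively ample line bundle twisted by the pullback of a sufficiently ample bundle is ample), some power $\OOO_{\PP(G)}(N)$ dominates this ample bundle modulo an effective divisor, and pushing forward by $\rho_G$ yields a nonzero section of $S^{N-1}G\otimes A^{-n}$. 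The ``if'' direction is the key point: given a nonzero section $\sigma$ of $\OOO_{\PP(G)}(m)\otimes\rho_G^*A^{-1}$, set $\mathcal{A}=\OOO_{\PP(G)}(1)\otimes\rho_G^*A^{\otimes n}$, which is ample for $n\gg0$. Then $\sigma^{\otimes n}$ gives $\OOO_{\PP(G)}(mn+1)=\mathcal{A}\otimes\OOO\big(n\cdot\mathrm{div}(\sigma)\big)$, an ample bundle twisted by an effective divisor, hence big; so $\OOO_{\PP(G)}(1)$ is big and $G$ is big.

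With this criterion the theorem is short. Applying it to the big bundles $E$ and $F$ gives ample line bundles $A_1,A_2$ and integers $a,b>0$ with nonzero sections of $S^aE\otimes A_1^{-1}$ and $S^bF\otimes A_2^{-1}$. Raising these to powers and replacing $A_1,A_2$ by appropriate powers, I may assume both symmetric powers have the same degree $m$, producing nonzero $s\in H^0(X,\,S^mE\otimes A_1^{-1})$ and $t\in H^0(X,\,S^mF\otimes A_2^{-1})$. Their product $s\otimes t$ is a nonzero section of $(S^mE\otimes S^mF)\otimes(A_1\otimes A_2)^{-1}$, nonzero because $s$ and $t$ are nonzero at a common general point of the irreducible variety $X$. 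In characteristic zero Cauchy's formula exhibits $S^mE\otimes S^mF$ as a direct summand of $S^m(E\otimes F)$, and the resulting inclusion $S^mE\otimes S^mF\hookrightarrow S^m(E\otimes F)$ carries $s\otimes t$ to a nonzero section of $S^m(E\otimes F)\otimes(A_1\otimes A_2)^{-1}$. Since $A_1\otimes A_2$ is ample, the criterion applied to $E\otimes F$ shows that $E\otimes F$ is big.

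I expect the genuine obstacle to be the ``if'' direction of the criterion rather than the assembly of the section. The naive attempt of restricting $\OOO_{\PP(E\otimes F)}(1)$ to the relative Segre image $\PP(E)\times_X\PP(F)\hookrightarrow\PP(E\otimes F)$ only controls a subvariety of dimension $n+\mathrm{rk}(E)+\mathrm{rk}(F)-2$, which is strictly smaller than $\dim\PP(E\otimes F)=n+\mathrm{rk}(E)\,\mathrm{rk}(F)-1$, so it cannot by itself detect bigness. The relative-ampleness boosting inside the criterion is exactly what bypasses this dimension gap, upgrading a single twisted section to the full ample-plus-effective decomposition that certifies bigness.
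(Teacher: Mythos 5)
Your proof is correct, and its skeleton matches the paper's: both arguments reduce Theorem \ref{main2} to the twisted-section criterion for bigness (your characterization is precisely the paper's Proposition \ref{main8}, which the paper proves by the same power-of-a-section trick on $\PP(G)$ that you use, with Definition \ref{Def1} playing the role of your appeal to Kodaira's lemma), and both then manufacture a nonzero section of some symmetric power of $E\otimes F$ twisted by the inverse of an ample line bundle. Where you genuinely diverge is the combination step. The paper's Lemma \ref{main6} works directly with sections $s_1$ of $S^{m_1}(E)\otimes L$ and $s_2$ of $S^{m_2}(F)\otimes L$ of \emph{unequal} degrees: it pushes $s_1^{\otimes m_2}\otimes s_2^{\otimes m_1}$ through the natural surjection $(E\otimes F)^{\otimes m_1m_2}\otimes L^{m_1+m_2}\twoheadrightarrow S^{m_1m_2}(E\otimes F)\otimes L^{m_1+m_2}$ and verifies nonvanishing by an explicit polarization argument, pairing the image against $(v_1\otimes v_2)^{\otimes m_1m_2}\otimes w_1^{m_2}\otimes w_2^{m_1}$ at a point where both sections are nonzero. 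You instead first equalize degrees by raising sections to powers (legitimate, via powers of the corresponding sections of $\OOO_{\PP(E)}(a)\otimes\pi^*A_1^{-1}$), and then invoke Cauchy's formula $S^m(E\otimes F)\simeq\bigoplus_{|\lambda|=m}\mathbb{S}^\lambda E\otimes \mathbb{S}^\lambda F$ to realize $S^mE\otimes S^mF$ as a direct summand of $S^m(E\otimes F)$, so that nonvanishing of the image of $s\otimes t$ is automatic from the split injection. Your route buys a cleaner nonvanishing argument at the cost of a representation-theoretic input and explicit reliance on characteristic zero (a reliance the paper shares, but only implicitly, through the inclusion $S^{m}E\hookrightarrow E^{\otimes m}$); the paper's route is more elementary and self-contained, and handles unequal degrees without the preliminary degree-matching step. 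Your closing observation that restricting to the Segre locus $\PP(E)\times_X\PP(F)$ could not detect bigness of $\OOO_{\PP(E\otimes F)}(1)$ is a sound sanity check and correctly identifies why some form of the section criterion is needed.
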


\begin{thm}\label{main3}
If $E$ is a big vector bundle on a projective variety $X,$ then the symmetric power $S^m(E)$ is big for all $m\,>\,0.$
\end{thm}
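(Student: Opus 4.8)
The plan is to reduce bigness of $S^m(E)$ to a section-theoretic criterion for bigness of a vector bundle, which I expect is the same mechanism underlying Theorem \ref{main2}. Recall that $E$ being big means $\OOO_{\PP(E)}(1)$ is a big line bundle on $\PP(E)$; write $\pi\,:\,\PP(E)\,\to\,X$ for the projection, so that $\pi_*\OOO_{\PP(E)}(k)\,=\,S^k(E)$. The criterion I will use is: \emph{for an ample line bundle $A$ on $X$ with $E\otimes A$ ample, the bundle $E$ is big if and only if $H^0(X,\,S^c(E)\otimes A^{-1})\,\neq\,0$ for some $c\,>\,0$.} (Such an $A$ exists, since twisting by a sufficiently ample line bundle makes any bundle ample.)

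I would prove this criterion as follows. Since $E\otimes A$ is ample, $B\,:=\,\OOO_{\PP(E)}(1)\otimes\pi^*A$ is an ample line bundle on $\PP(E)$. A nonzero section of $S^c(E)\otimes A^{-1}$ is the same as a nonzero section of $\OOO_{\PP(E)}(c)\otimes\pi^*A^{-1}$, i.e. an effective divisor $D$ with $\OOO_{\PP(E)}(c)\,=\,\pi^*A\otimes\OOO(D)$; multiplying by $\OOO_{\PP(E)}(1)$ gives $\OOO_{\PP(E)}(c+1)\,=\,B\otimes\OOO(D)$, an ample class plus an effective one, so $\OOO_{\PP(E)}(1)$ is big. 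Conversely, if $\OOO_{\PP(E)}(1)$ is big then Kodaira's lemma applied to the ample bundle $B$ yields some $c$ with $H^0(\PP(E),\,\OOO_{\PP(E)}(c+1)\otimes B^{-1})\,=\,H^0(X,\,S^c(E)\otimes A^{-1})\,\neq\,0$.

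With the criterion in hand, the theorem is short. Fix $A$ ample with $E\otimes A$ ample and obtain $0\,\neq\,s\,\in\,H^0(X,\,S^c(E)\otimes A^{-1})$. Taking the $m$-th power in the symmetric algebra $\bigoplus_k S^k(E)$ produces $s^m\,\in\,H^0(X,\,S^{cm}(E)\otimes A^{-m})$, and $s^m$ is nonzero because the symmetric algebra of a vector space is an integral domain, so the $m$-th power of the generically nonzero value of $s$ cannot vanish. In characteristic $0$ the multiplication map $S^c(S^m(E))\,\to\,S^{cm}(E)$ is a split surjection, so $S^{cm}(E)$ is a direct summand of $S^c(S^m(E))$; hence $s^m$ may be viewed as a nonzero element of $H^0(X,\,S^c(S^m(E))\otimes(A^{\otimes m})^{-1})$.

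Finally I apply the criterion to the bundle $S^m(E)$ with the ample line bundle $A^{\otimes m}$. Its hypothesis is met because $(S^m E)\otimes A^{\otimes m}\,=\,S^m(E\otimes A)$ is ample, $E\otimes A$ being ample and symmetric powers of ample bundles being ample; and we have just exhibited a nonzero section of $S^c(S^m(E))\otimes(A^{\otimes m})^{-1}$. Therefore $S^m(E)$ is big. The main obstacle is the criterion itself, and specifically its converse direction: one must use that $\OOO_{\PP(E)}(1)$ is relatively ample, so that a single twist by $\pi^*A$ converts the effective class $\OOO_{\PP(E)}(c)\otimes\pi^*A^{-1}$ into an ample-plus-effective one. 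Once this, the domain property of the symmetric algebra, and the characteristic-zero splitting are granted, the remaining steps are formal.
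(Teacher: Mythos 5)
Your proposal is correct and follows essentially the same route as the paper's: your section-theoretic criterion is precisely the paper's Proposition \ref{main8} (proved there for an arbitrary ample $A$, with no hypothesis that $E\otimes A$ be ample), and the paper's proof of Theorem \ref{main3} likewise takes the power $s^m$ of a section $s$ of $S^j(E)\otimes A^*$ and passes it through the characteristic-zero inclusion $S^{mj}(E)\hookrightarrow S^j(S^m(E))$ before reapplying the criterion to $S^m(E)$ with the ample line bundle $A^m$. The only divergence is that your version of the criterion carries the extra hypothesis that $E\otimes A$ be ample, which forces the additional (correct, but in the paper's setup unnecessary) verification that $S^m(E)\otimes A^{\otimes m}\,=\,S^m(E\otimes A)$ is ample.
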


The following result is a generalization of a theorem of M. Schneider \cite{Sch} (see Remark \ref{Rem3}).

\begin{thm}\label{main4} 
If $E$ and $F$ are nef vector bundles on a projective variety $X,$ and one of them is also big, then $E\otimes F$ is 
nef and big.
\end{thm}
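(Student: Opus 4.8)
The plan is to separate the two conclusions. \emph{Nefness} of $E\otimes F$ is the easy half and does not use bigness: a vector bundle is nef if and only if its pullback to every smooth projective curve mapping to $X$ is nef, so it suffices to show that a tensor product of two nef bundles on a curve is nef. On a curve, nefness means that the minimal Harder--Narasimhan slope is non-negative, and this minimal slope is additive under tensor product (in characteristic zero a tensor product of semistable bundles is semistable); hence $\mu_{\min}(E\otimes F)=\mu_{\min}(E)+\mu_{\min}(F)\ge 0$ and $E\otimes F$ is nef.

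For \emph{bigness} I will use the numerical criterion for nef bundles. Writing $\xi_G=c_1(\mathcal O_{\mathbb P(G)}(1))$ for a bundle $G$ of rank $g$ on $X$ with $\dim X=n$ (and $\pi\colon\mathbb P(G)\to X$ the projection), the line bundle $\xi_G$ is nef when $G$ is nef, and a nef line bundle is big exactly when its top self-intersection is positive; thus $G$ is big if and only if $\int_{\mathbb P(G)}\xi_G^{\,n+g-1}>0$. Since $\pi_*\xi_G^{\,g-1+k}=h_k(a_1,\dots,a_g)$, the $k$-th complete homogeneous symmetric polynomial in the Chern roots $a_i$ of $G$, this top intersection number equals the top Segre number $\int_X h_n(a_1,\dots,a_g)$, which is $\ge 0$ for nef $G$. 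Assume, as we may, that $E$ is the big factor, so $\int_X h_n(a_1,\dots,a_r)>0$ for the roots $a_i$ of $E$. It then suffices to prove the inequality of Segre numbers
\[
\int_X h_n(\text{roots of }E\otimes F)\ \ge\ \int_X h_n(\text{roots of }E).
\]

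To prove this I would apply the splitting principle: pass to the fibre product $p\colon Y\to X$ of the complete flag bundles of $E$ and $F$, on which $p^*E$ and $p^*F$ carry filtrations whose line-bundle quotients are nef, with Chern roots $a_1,\dots,a_r$ and $b_1,\dots,b_s$; all of these are nef classes on $Y$. The Chern roots of $E\otimes F$ are the sums $a_i+b_j$, so the claimed inequality becomes $\int_X\big(h_n(\{a_i+b_j\})-h_n(\{a_i\})\big)\ge 0$. The crucial point is that $h_n$ has non-negative coefficients and is therefore monotone under enlarging its list of (nef) arguments and under replacing $a_i$ by $a_i+b_j$ with $b_j$ nef; consequently the difference $h_n(\{a_i+b_j\})-h_n(\{a_i\})$ is a non-negative integral combination of monomials in the nef classes $a_i,b_j$. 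Integrating against the effective relative point-class $\eta$ on $Y$ (normalised so that $\int_X\alpha=\int_Y p^*\alpha\cdot\eta$) expresses each such monomial as the intersection number of a product of nef classes with an effective cycle, which is $\ge 0$. Hence the difference of Segre numbers is non-negative, $\int_X h_n(\text{roots of }E\otimes F)>0$, and $E\otimes F$ is big as well as nef.

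The main obstacle is exactly this last positivity bookkeeping. I expect the real work to lie in (i) checking that the difference of the two complete homogeneous symmetric polynomials is genuinely monomial-positive in the roots --- this is the monotonicity of $h_n$, and it is what converts the mere nefness of the second factor into the desired inequality --- and (ii) verifying that products of nef classes meet the effective class $\eta$ non-negatively, for which one uses that nef classes restrict to nef classes on any subvariety and that a top intersection of nef divisor classes is non-negative. By contrast, the reduction to curves for nefness and the volume criterion for bigness are standard and should present no difficulty.
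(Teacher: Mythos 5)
Your nefness argument is fine --- it is the standard reduction to curves (the paper simply quotes the same fact, that a tensor product of nef bundles is nef, from \cite[Theorem 6.2.12]{La2}). The genuine gap is in the bigness half, at the sentence ``all of these are nef classes on $Y$.'' This is false: a \emph{quotient} of a nef bundle is nef, but the Chern roots of $E$ are realized on the flag bundle as the first Chern classes of the successive quotients of the tautological filtration, and all of these except the top one are (twists of) sub-bundles of $p^*E$, which need not be nef. Already for a rank-two nef bundle the two roots on $\PP(E)$ are $a_1=\xi$ and $a_2=\pi^*c_1(E)-\xi$, and a direct computation with the Grothendieck relation gives
\begin{equation*}
\int_{\PP(E)} a_2^2\cdot \xi \;=\; -\int_X c_2(E),
\end{equation*}
which for $E=T_{\mathbb{P}^2}$ (nef, indeed ample) equals $-3<0$. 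So a sum of monomials in the roots with non-negative coefficients can very well have negative integral, and your final step --- ``each such monomial is an intersection of nef classes with an effective cycle, hence $\ge 0$'' --- collapses, taking the claimed inequality of Segre numbers with it.

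This is not a repairable bookkeeping slip within your framework: by the Fulton--Lazarsfeld positivity theorem, the polynomials in Chern classes that are non-negative on all nef bundles are exactly the non-negative combinations of Schur polynomials (products of Schur polynomials when several bundles are involved), and monomial-positivity does not imply Schur-positivity (e.g.\ $h_2-e_2=s_{(2)}-s_{(1,1)}$ is monomial-positive but not Schur-positive). To salvage your route you would have to show that $h_n(\{a_i+b_j\})-h_n(\{a_i\})$ is a non-negative combination of products $s_\lambda(E)\,s_\mu(F)$, a substantially harder combinatorial statement that the proposal never addresses. The paper avoids this entirely: it characterizes bigness by sections (Proposition~\ref{main8}: $H^0(X,S^mE\otimes A^*)\neq 0$ for some ample $A$), passes to a Bloch--Gieseker cover $f$ with $f^*A=\widetilde{A}^{2m}$ so that $f^*E\otimes\widetilde{A}^*$ is big while $\widetilde{A}\otimes f^*F$ is ample hence big, applies Theorem~\ref{main2} (tensor product of big bundles is big) to conclude $f^*(E\otimes F)$ is nef and big, and only then uses the numerical criterion of Theorem~\ref{Rem2a} to descend bigness along the finite cover.
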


\section{Direct image of anti-ample bundles}

For standard notation and facts about vector bundles used here we refer to \cite{La1}, \cite{La2}. 

\begin{Def} 
A vector bundle $E$ on a projective variety over ${\mathbb C}$ is said to be anti-ample (respectively,
anti-nef) if the dual vector bundle $E^*$ is ample (respectively, nef).
\end{Def}

\begin{lem}\label{lem01}
Let $X$ and $Y$ be non-singular irreducible projective curves and $\pi \,:\, X \,\longrightarrow\, Y$
a nonconstant morphism. Then for any anti-nef bundle $E$ on $X$,
the direct image $\pi_*E$ is an anti-nef vector bundle.
\end{lem}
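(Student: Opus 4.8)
The plan is to convert anti-nefness into a slope inequality and then play the adjunction between $\pi^*$ and $\pi_*$ against the fact that pullback preserves semistability. Recall that on a smooth projective curve a vector bundle $V$ is nef if and only if $\mu_{\min}(V)\,\ge\,0$, where $\mu_{\min}$ denotes the smallest slope occurring in the Harder--Narasimhan filtration (see \cite{La2}). Dualizing, and using $\mu_{\min}(E^*)\,=\,-\mu_{\max}(E)$, the bundle $E$ is anti-nef if and only if $\mu_{\max}(E)\,\le\,0$; likewise the assertion to be proved is exactly $\mu_{\max}(\pi_*E)\,\le\,0$. Since $\pi$ is a nonconstant morphism of smooth projective curves it is finite and flat, so $\pi_*E$ is again a vector bundle. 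Set $d\,=\,\deg\pi$.

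To bound $\mu_{\max}(\pi_*E)$ I would let $S\,\subseteq\,\pi_*E$ be the maximal destabilizing subsheaf, so that $S$ is semistable with $\mu(S)\,=\,\mu_{\max}(\pi_*E)$. The inclusion $S\,\hookrightarrow\,\pi_*E$ is a nonzero element of $\operatorname{Hom}_Y(S,\,\pi_*E)$, and the adjunction isomorphism $\operatorname{Hom}_Y(S,\,\pi_*E)\,\cong\,\operatorname{Hom}_X(\pi^*S,\,E)$ then yields a nonzero homomorphism $\phi\,\colon\,\pi^*S\,\longrightarrow\,E$ on $X$. Note that $S$, being torsion-free on a smooth curve, is locally free, so $\pi^*S$ is a vector bundle with $\mu(\pi^*S)\,=\,d\,\mu(S)$.

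The crucial point is that $\pi^*S$ is again semistable: over $\mathbb C$ the pullback of a semistable bundle under a finite morphism of smooth projective curves is semistable. This is the step I expect to be the main obstacle, and the one place where characteristic $0$ is genuinely used; the standard argument passes to the Galois closure of $\pi$ and observes that the maximal destabilizing subsheaf of the pullback, being unique, is invariant under the Galois group and therefore descends, contradicting the semistability of $S$ unless the pullback is already semistable. Granting this, $\pi^*S$ is semistable of slope $d\,\mu(S)$.

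Finally I would invoke the elementary fact that a nonzero homomorphism $\phi\,\colon\,A\,\to\,B$ out of a semistable bundle $A$ forces $\mu(A)\,\le\,\mu_{\max}(B)$: the image $I$ is a nonzero quotient of $A$, so $\mu(A)\,\le\,\mu(I)$ by semistability of $A$, while $I\,\subseteq\,B$ gives $\mu(I)\,\le\,\mu_{\max}(B)$. Applying this to $\phi$ with $A\,=\,\pi^*S$ and $B\,=\,E$ gives $d\,\mu(S)\,=\,\mu(\pi^*S)\,\le\,\mu_{\max}(E)\,\le\,0$, the last inequality being precisely the anti-nefness of $E$. Since $d\,>\,0$, this yields $\mu_{\max}(\pi_*E)\,=\,\mu(S)\,\le\,0$, which is exactly the statement that $\pi_*E$ is anti-nef.
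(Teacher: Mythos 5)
Your argument is correct, but it follows a genuinely different route from the paper's. The paper never mentions slopes: it invokes the dual form of the curve criterion for nefness (anti-nefness of $\pi_*E$ is tested by showing $\deg L \le 0$ for every line subbundle $L \hookrightarrow f^*\pi_*E$, for every nonconstant $f\colon C \to Y$ from a smooth projective curve), forms the fiber product $C\times_Y X$, uses the base-change identity $q_*p^*E = f^*\pi_*E$ together with adjunction to convert $L \hookrightarrow f^*\pi_*E$ into a map $q^*L \to p^*E$, and finally pulls everything back to the normalization $\widetilde{C}$ of a one-dimensional component of $C\times_Y X$, where anti-nefness of the pullback of $E$ under the finite map to $X$ yields the degree bound. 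You instead stay on $Y$ and argue with Harder--Narasimhan theory: anti-nefness rephrased as $\mu_{\max}\le 0$, adjunction $\operatorname{Hom}_Y(S,\pi_*E)\cong\operatorname{Hom}_X(\pi^*S,E)$ applied to the maximal destabilizing subbundle $S$, semistability of $\pi^*S$, and the slope inequality for a nonzero map out of a semistable bundle. Both proofs pivot on the same adjunction between $\pi^*$ and $\pi_*$; the difference is what surrounds it. Your route is shorter and avoids auxiliary curves, fiber products and normalizations, but it consumes the two characteristic-zero inputs you correctly flag: the equivalence of nefness with $\mu_{\min}\ge 0$ on a curve, and the preservation of semistability under pullback by finite morphisms (both fail in positive characteristic, e.g.\ via Frobenius destabilization). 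The paper's argument uses only that nefness is preserved by finite pullback plus the curve criterion, so it is essentially characteristic-free; moreover, its fiber-product-and-normalization mechanism is exactly the device reused immediately afterwards in the proof of Theorem \ref{main1} to reduce the case of higher-dimensional $Y$ to Lemma \ref{lem01}, so the paper's proof doubles as a warm-up for that reduction, whereas your slope argument is specific to the curve case.
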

 
\begin{proof}
By the dual version of the criterion of nef-ness of a vector bundle (see, 
\cite[Proposition 16.1.18 (i)]{La2}) it is enough to prove the following: For any
nonconstant morphism $f\, :\, C\, \longrightarrow\, Y$ from any irreducible smooth projective curve $C$,
and any line subbundle $\phi\, :\, L \,\hookrightarrow\, f^*\pi_*(E)$ on $C$, the inequality
\begin{equation}\label{e1}
\text{degree}(L) \,\, \leq\,\, 0
\end{equation}
holds.

There is a non-singular projective curve $\widetilde{C}$ together with nonconstant morphisms
$$\widetilde{\pi}\,:\, \widetilde{C} \,\longrightarrow\, C\,\ \ \text{ and }\,\ \ \widetilde{f}\,:\,
\widetilde{C}\,\longrightarrow\, X$$ satisfying the condition
$f\circ \widetilde{\pi}\,=\, \pi\circ\widetilde{f}$; for example, take $\widetilde C$ to be the
normalization of any irreducible component of dimension one of the fiber product $C\times_Y X$.
For any line subbundle
\begin{equation}\label{el}
\phi\,\,:\,\, L\,\, \longrightarrow\,\, f^*\pi_*E,
\end{equation}
we have the line subbundle
$$
\widetilde{\pi}^*\phi\,\, :\,\, \widetilde{\pi}^*L \,\, \longrightarrow\,\, \widetilde{\pi}^*f^*\pi_* E
$$
of $\widetilde{\pi}^*f^*\pi_*E \, \longrightarrow\, \widetilde{C}$. Consider the fiber product
\begin{equation}\label{fb}
\begin{matrix}
C\times_Y X & \stackrel{p}{\longrightarrow} & X\\
\,\,\, \Big\downarrow q && \,\,\, \Big\downarrow\pi\\
C & \stackrel{f}{\longrightarrow} & Y
\end{matrix}
\end{equation}
We have $q_*(p^* E) \,=\, f^*(\pi_*E)$. Hence $\phi$ in \eqref{el} produces a homomorphism
\begin{equation}\label{el2}
\widehat{\phi}\,\, :\,\, q^*L \,\, \longrightarrow\,\, p^*E
\end{equation}
(see \cite[p.~110]{Ha}).
Consider the map $(\widetilde{\pi},\, \widetilde{f})\, :\, \widetilde{C} \,\longrightarrow\, C\times_Y X$.
Let
\begin{equation}\label{el3}
(\widetilde{\pi},\, \widetilde{f})^*\widehat{\phi}\,\, :\,\, (\widetilde{\pi},\, \widetilde{f})^*q^*L\,\,
\longrightarrow\,\, (\widetilde{\pi},\, \widetilde{f})^*p^* E
\end{equation}
be the pullback of $\widehat{\phi}$ (see \eqref{el2}). The vector bundle
$(\widetilde{\pi},\, \widetilde{f})^*p^*E$ is anti-nef because $E$ is anti-nef and
$p\circ (\widetilde{\pi},\, \widetilde{f})$ is a finite morphism. Since
$(\widetilde{\pi},\, \widetilde{f})^*p^*E$ is anti-nef, its line subbundle 
$(\widetilde{\pi},\, \widetilde{f})^*q^* L$ in \eqref{el3} satisfies the following condition:
$$
\text{degree}((\widetilde{\pi},\, \widetilde{f})^*q^*L)\,\, \leq\,\, 0.
$$
This implies that \eqref{e1} holds. This completes the proof.
\end{proof} 

\begin{proof}[{Proof of Theorem \ref{main1}}]
Let $\pi\,: \,X \,\longrightarrow\, Y$ be a finite surjective morphism of non-singular irreducible
projective varieties. Let $E$ be a vector bundle on $X$.

First assume that $E$ is anti-nef. To prove that $\pi_*E$ is anti-nef, it suffices to show the following:

If $f\, :\, C\, \longrightarrow\, Y$ is any nonconstant morphism from any irreducible smooth
projective curve $C$, then $f^*(\pi_*E)$ is anti-nef.

Let
\begin{equation}\label{ca}
\begin{matrix}
C\times_Y X & \stackrel{p}{\longrightarrow} & X\\
\,\,\, \Big\downarrow q && \,\,\, \Big\downarrow\pi\\
C & \stackrel{f}{\longrightarrow} & Y
\end{matrix}
\end{equation}
be the fiber product. Let $\varphi\,:\, Z \,\longrightarrow\, C\times_Y X$ be the normalization.
Note that we have
\begin{equation}\label{a1}
q_*p^*E \,\, \subset\,\, (q\circ\varphi)_* (\varphi^*p^*E)
\end{equation}
because the following diagram is commutative
$$
\begin{matrix}
Z & \stackrel{\varphi}{\longrightarrow}\ & C\times_Y X\\
\,\,\,\,\,\,\,\,\,\,\, \Big\downarrow q\circ\varphi && \,\, \Big\downarrow q\\
C & \stackrel{\rm Id}{\longrightarrow} & C
\end{matrix}
$$

The vector bundle $\varphi^*p^*E$ is anti-nef because $E$ is anti-nef and $p\circ\varphi$ is a finite map onto
its image. Therefore, from Lemma \ref{lem01} we conclude that $(q\circ\varphi)_* (\varphi^*p^*E)$ is anti-nef.
This implies that its subsheaf $q_*(p^*E)$ (see \eqref{a1}) is also anti-nef. But
$q_*(p^*E)\,=\, f^*(\pi_* E)$ because the diagram in \eqref{ca} is Cartesian. Hence
$f^*(\pi_*E)$ is anti-nef. This implies the $\pi_* E$ is anti-nef.

Now assume that $E$ is anti-ample.

Let $D$ be an ample divisor on $Y$. The $\mathbb Q$--twisted vector bundle
$E\langle \frac{1}{m} \pi^*(D)\rangle$ is again an anti-ample bundle on $X$ for all large integers $m$
(here we are using the notion of $\mathbb Q$--twisted bundle as in 
\cite[Ch.~ 6.2]{La2}). Hence we conclude that the $\mathbb Q$-twisted bundle 
$$\pi_*\left(E\langle \frac{1}{m} \pi^*(D)\rangle\right) \,\,=\,\, \pi_*(E)\langle \frac{1}{m} D\rangle $$ is anti-nef for
large $m$. Since the $\mathbb Q$--divisor $-\frac{1}{m} D$ is anti-ample, we conclude that the vector bundle 
$$\pi_*(E) \,=\, ((\pi_* E)\langle \frac{1}{m} D\rangle)\langle -\frac{1}{m} D\rangle$$ is anti-ample.
\end{proof}

\begin{rem}\label{Rem1}
Let $C$ be a non-singular curve of genus 1, and let $\pi: C \,\longrightarrow\, {\mathbb P}^1$ be
a morphism of degree two. If $L$ is any nontrivial line bundle on $C$ of degree $0$,
then it can be seen that $L$ is anti-nef but it is not anti-ample, while
$$\pi_*(L) \,=\, {\mathcal O}_{{\mathbb P}^1}(-1)^{\oplus 2}$$
is anti-ample.
\end{rem}

\section{big and nef vector bundles} 
 
The characterization 2.2.7 of big line bundle in \cite{La1} is equivalent to the following:
 
\begin{Def}\label{Def1} A line bundle $L$ on a projective variety $X$ is big if and only if there is
an ample line bundle $A$ on $X$ and a positive number $m$ such that
$$ H^0(X,\, L^m\otimes A^*)\,\neq\, 0.$$ 
\end{Def} 

A vector bundle $E$ is big if and only if $\OOO_{\PP(E)}(1)$ is big.

We need to recall a theorem from \cite{La1}.

\begin{thm}[{\cite[Theorem 2.2.16]{La1}}]\label{Rem2a}
A nef line bundle $L$ on a non-singular projective variety $X$ of dimension $n$ is big if and only if $c_1(L)^n \,>\,0,$
where $c_1(L)$ is the first Chern class of $L$ 

For a vector bundle this bigness criterion translates into the following:

A nef vector bundle
$E$ on a non-singular projective variety $X$ of dimension $n$ is big if and only if $(-1)^n s_n(E)
\,>\,0$, where $s_n(E)$ is the top Segre class of $E$.
\end{thm}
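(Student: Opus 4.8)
The plan is to deduce the vector-bundle criterion from the line-bundle criterion (the first assertion of the theorem, cited from \cite{La1}), applied on the projectivization of $E$. Write $r\,=\,\operatorname{rank}(E)$ and let $\pi\,:\,\PP(E)\,\longrightarrow\,X$ be the projective bundle, so that $\PP(E)$ is a non-singular projective variety of dimension $N\,:=\,n+r-1$. Set $\xi\,:=\,c_1(\OOO_{\PP(E)}(1))$. By the characterization recalled just before the theorem, $E$ is big if and only if $\OOO_{\PP(E)}(1)$ is big; and, by the definition of nefness for a vector bundle, $E$ is nef if and only if $\OOO_{\PP(E)}(1)$ is nef. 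Hence the hypothesis that $E$ is nef is exactly the statement that $\OOO_{\PP(E)}(1)$ is a nef line bundle on $\PP(E)$.

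With these translations in hand, the line-bundle form of the theorem applies verbatim to $L\,=\,\OOO_{\PP(E)}(1)$ on $\PP(E)$: since $L$ is nef, it is big if and only if
$$c_1(L)^N \,=\, \int_{\PP(E)} \xi^{\,n+r-1}\,>\,0.$$
Thus $E$ is nef and big if and only if this self-intersection number is strictly positive, and it only remains to express it through invariants of $E$ on $X$.

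For the last point I would invoke the projection formula together with the definition of the Segre classes. By the projection formula,
$$\int_{\PP(E)} \xi^{\,n+r-1} \,=\, \int_X \pi_*\!\left(\xi^{\,n+r-1}\right),$$
and since $\pi$ has relative dimension $r-1$, the class $\pi_*(\xi^{\,r-1+i})$ lies in the codimension-$i$ part of $X$ and, up to sign, is the $i$-th Segre class of $E$. Taking $i\,=\,n$ and integrating over $X$ therefore expresses the self-intersection number above as $\pm s_n(E)$; a direct check pins the sign down to $(-1)^n$. Combining the three steps gives that $E$ is nef and big if and only if $(-1)^n s_n(E)\,>\,0$, as asserted.

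The step needing the most care is the sign bookkeeping in this last identity. The factor $(-1)^n$ is an artifact of the two conventions in play: $\PP(E)$ is taken to be the bundle of one-dimensional quotients (so that ampleness, nefness and bigness of $E$ match those of $\OOO_{\PP(E)}(1)$), whereas $s(E)$ is normalized as the inverse total Chern class $c(E)^{-1}$. Passing to Chern roots $a_1,\dots,a_r$ of $E$ makes the matching transparent: $\pi_*(\xi^{\,n+r-1})$ equals the complete homogeneous symmetric function $h_n(a_1,\dots,a_r)$, while the degree-$n$ part of $c(E)^{-1}\,=\,\prod_j(1+a_j)^{-1}$ is $(-1)^n h_n(a_1,\dots,a_r)$; the two agree up to the sign $(-1)^n$, which is precisely the claimed factor. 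Everything else in the argument is formal.
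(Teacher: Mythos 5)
Your proposal is correct, and there is nothing in the paper to compare it against: the paper gives no proof of this statement. The line-bundle criterion is quoted directly from \cite[Theorem 2.2.16]{La1}, and the vector-bundle form is merely asserted as a ``translation'' of it. Your argument supplies exactly the justification that the paper leaves implicit, and it is the natural one: nefness and bigness of $E$ are equivalent (by definition, and by the characterization recalled just before the theorem) to nefness and bigness of $\OOO_{\PP(E)}(1)$ on the non-singular projective variety $\PP(E)$ of dimension $n+r-1$, so the line-bundle criterion reduces everything to the identity $\pi_*\bigl(c_1(\OOO_{\PP(E)}(1))^{n+r-1}\bigr)\,=\,(-1)^n s_n(E)$. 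Your sign bookkeeping, the only delicate point, is right: with the quotient convention for $\PP(E)$, the Grothendieck relation gives $\pi_*(\xi^{r-1+i})\,=\,h_i(a_1,\dots,a_r)$ in the Chern roots, whereas the degree-$i$ piece of $c(E)^{-1}$ is $(-1)^i h_i(a_1,\dots,a_r)$. One remark worth making explicit: the factor $(-1)^n$ in the statement therefore presupposes the normalization $s(E)\,=\,c(E)^{-1}$ (Fulton's convention), which is what you identified; with the alternative normalization $s_i(E)\,:=\,\pi_*(\xi^{r-1+i})$, under which ample bundles have positive Segre classes, the criterion would read simply $s_n(E)\,>\,0$. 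Your derivation is consistent with the formula as printed in the paper.
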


The following two propositions will be needed in the proofs of the remaining theorems.

\begin{prop}\label{main7} 
Let $E$ be a big vector bundle on $X$, and let $B$ be any line bundle on $X$. Then there is an integer $m\,\geq\, 1$ such that
$$H^0(X,\,S^m(E)\otimes B)\,\,\neq\,\, 0.$$
\end{prop}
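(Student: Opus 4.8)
The plan is to pass to the projective bundle $p\,:\,\PP(E)\,\longrightarrow\, X$ and work with the tautological line bundle $L\,:=\,\OOO_{\PP(E)}(1)$. By the definition recalled just after Definition \ref{Def1}, $E$ being big is exactly the statement that $L$ is a big line bundle on $\PP(E)$. Since $p_*L^m\,=\, S^m(E)$ for every $m\,\geq\, 0$, the projection formula gives $p_*(L^m\otimes p^*B)\,=\, S^m(E)\otimes B$, and therefore
\[
H^0(X,\, S^m(E)\otimes B)\,=\, H^0(\PP(E),\, L^m\otimes p^*B).
\]
Thus it suffices to produce a single integer $m\,\geq\, 1$ for which $H^0(\PP(E),\, L^m\otimes p^*B)\,\neq\, 0$.

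Next I would unwind the bigness of $L$ through Definition \ref{Def1}: there exist an ample line bundle $A$ on $\PP(E)$ and an integer $k\,\geq\, 1$ with $H^0(\PP(E),\, L^k\otimes A^*)\,\neq\, 0$. A nonzero section of $L^k\otimes A^*$ cuts out an effective divisor $D$, so that $L^k\,\cong\, A\otimes \OOO_{\PP(E)}(D)$. This is the usual ``ample plus effective'' decomposition of a power of a big line bundle, and it is the form in which I want to use bigness.

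The heart of the argument is then to absorb the fixed twist $p^*B$ using the ample summand $A$. Since $A$ is ample on the projective variety $\PP(E)$, for all sufficiently large $r$ the line bundle $A^r\otimes p^*B$ is globally generated (indeed very ample), and in particular $H^0(\PP(E),\, A^r\otimes p^*B)\,\neq\, 0$. At the same time $rD$ is effective, so $\OOO_{\PP(E)}(rD)$ carries its canonical nonzero section. Multiplying a nonzero section of $A^r\otimes p^*B$ by that section of $\OOO_{\PP(E)}(rD)$ produces a nonzero section of
\[
A^r\otimes \OOO_{\PP(E)}(rD)\otimes p^*B\,=\, L^{kr}\otimes p^*B.
\]
Taking $m\,=\, kr$ and using the identification of global sections above then yields $H^0(X,\, S^m(E)\otimes B)\,\neq\, 0$, which is the claim.

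I expect the only genuine obstacle to lie in this middle step, namely in converting bigness of $L$ into a usable ample-plus-effective decomposition and then checking that the prescribed line bundle $p^*B$ can be swallowed; once one notes that a high power of an ample bundle dominates any fixed line bundle, the remainder is a routine multiplication of sections. It is also worth verifying the bookkeeping that $p_*L^m\,=\, S^m(E)$ holds for $m\,\geq\, 0$, so that the displayed identification between sections on $X$ and on $\PP(E)$ is legitimate.
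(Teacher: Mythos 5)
Your proof is correct, and while it shares the paper's underlying mechanism --- write a power of the relevant big line bundle as (ample) $\otimes$ (effective), use the ample factor to absorb the prescribed twist, and multiply sections --- you execute it in a genuinely different place, and the difference has real content. The paper works with an ample line bundle $A$ on the base $X$: it asserts ``from the definition of bigness'' that $H^0(X,S^j(E)\otimes A^*)\neq 0$ for some ample $A$ on $X$, then absorbs $B$ by choosing $t\in H^0(X,A^l\otimes B)$ and takes the section $\widetilde{s}^{\,l}\otimes\pi^*t$ of $\OOO_{\PP(E)}(jl)\otimes\pi^*B$. You instead stay entirely on $\PP(E)$: Definition \ref{Def1} applied to $\OOO_{\PP(E)}(1)$ gives you an ample line bundle on $\PP(E)$ (not a priori pulled back from $X$), you absorb $p^*B$ via Serre's global generation theorem for ample bundles on $\PP(E)$, and you multiply by the canonical section of $\OOO_{\PP(E)}(rD)$. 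What your route buys is self-containedness: the paper's opening step is not literally Definition \ref{Def1} (which, for a vector bundle, produces an ample bundle on $\PP(E)$, and descending to an ample bundle on $X$ requires precisely an argument of the kind you give, or the ``any ample line bundle'' form of Kodaira's lemma); moreover the natural reference for that step, Proposition \ref{main8}, is itself proved in the paper by citing Proposition \ref{main7}, so your arrangement avoids even the appearance of circularity. What the paper's route buys is lighter input --- it only needs that some power of an ample bundle on $X$ acquires a section after twisting by $B$, rather than global generation upstairs --- and exponents expressed directly in terms of data on $X$. Two small points you rightly flag both hold: $p_*\bigl(\OOO_{\PP(E)}(m)\otimes p^*B\bigr)=S^m(E)\otimes B$ by the projection formula, and the product of nonzero sections of line bundles is nonzero because $\PP(E)$ is integral (the paper's proof uses this same fact implicitly).
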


\begin{proof}
Since $E$ is a big vector bundle on $X$, we deduce from the definition of bigness that there is an ample line bundle $A$
on $X$ such that 
$$H^0(X,\,S^j(E)\otimes A^*)\,\,\neq\,\, 0$$ for some integer $j \,\geq\, 1.$ Since $A$ is ample, there is an integer $l
\,\geq\, 1$ such that
$$H^0(X,\, A^{l}\otimes B)\,\,\neq\,\, 0.$$ If $s$ is a non-zero section of $S^j(E)\otimes A^*$, and $t$ is
a non-zero section of $A^{l} \otimes B$, then
$\widetilde{s}^l \otimes (\pi^*t)$ is a non-zero section of the line-bundle $\OOO_{\PP(E)}(jl)\otimes (\pi^*B),$ where
$\widetilde{s}$ is the section of
$\OOO_{\PP(E)}(j)\otimes \pi^*(A^*)$ corresponding to the section $s$ and $\pi \,:\, \PP(E) \,
\longrightarrow\, X$ is the natural projection map. The non-zero section
$\widetilde{s}^l \otimes \pi^*(t)$ of $\OOO_{\PP(E)}(jl)\otimes (\pi^*B)$ corresponds to a non-zero section
of $S^{jl}(E)\otimes B.$ This completes the proof.
\end{proof}

\begin{prop}\label{main8}
Let $E$ be a vector bundle on $X$, and let $A$ be any ample line bundle on $X$. The vector bundle $E$ is
big if and only if
$$H^0(X,\,S^mE\otimes A^*)\,\,\neq \,\,0$$
for some $m \,\geq \,1$. 
\end{prop}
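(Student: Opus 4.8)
The plan is to pass to the projectivization $\pi\,:\,\PP(E)\,\longrightarrow\, X$ and to exploit the identification
$$H^0(X,\,S^mE\otimes A^*)\,\,=\,\,H^0(\PP(E),\,\OOO_{\PP(E)}(m)\otimes\pi^*A^*),$$
which comes from $\pi_*\OOO_{\PP(E)}(m)\,=\,S^mE$ together with the projection formula and the fact that $\pi$ has no higher direct images on $\OOO_{\PP(E)}(m)$. By definition $E$ is big precisely when $\OOO_{\PP(E)}(1)$ is a big line bundle, so throughout I would test bigness of $\OOO_{\PP(E)}(1)$ through Definition \ref{Def1}. The two implications are then treated separately.

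For the forward implication I would assume $E$ is big and simply invoke Proposition \ref{main7} with the arbitrary line bundle $B$ taken to be $A^*$; this immediately yields an integer $m\,\geq\,1$ with $H^0(X,\,S^mE\otimes A^*)\,\neq\,0$. Thus this direction costs nothing beyond the preceding proposition.

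The real content is in the reverse implication, and this is the step I expect to be the main obstacle. Suppose a nonzero section
$$\sigma\,\in\, H^0(X,\,S^mE\otimes A^*)\,=\,H^0(\PP(E),\,\OOO_{\PP(E)}(m)\otimes\pi^*A^*)$$
is given. One cannot apply Definition \ref{Def1} directly, because $\pi^*A^*$ is the dual of $\pi^*A$, and $\pi^*A$ is only nef on $\PP(E)$, not ample (it restricts trivially to the fibres of $\pi$). The device to repair this is the standard relative-ampleness fact that, since $\OOO_{\PP(E)}(1)$ is $\pi$-ample, the line bundle
$$\mathcal{A}\,\,:=\,\,\OOO_{\PP(E)}(1)\otimes\pi^*A^{\otimes k}$$
is genuinely ample on $\PP(E)$ for all sufficiently large $k$. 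Fixing such a $k$, I would raise $\sigma$ to its $k$-th tensor power to obtain a nonzero section of $\OOO_{\PP(E)}(mk)\otimes\pi^*A^{-k}$, and then rewrite
$$\OOO_{\PP(E)}(mk)\otimes\pi^*A^{-k}\,\,=\,\,\OOO_{\PP(E)}(mk+1)\otimes\mathcal{A}^*.$$
Hence $H^0(\PP(E),\,\OOO_{\PP(E)}(mk+1)\otimes\mathcal{A}^*)\,\neq\,0$ with $\mathcal{A}$ ample and $mk+1\,>\,0$, so Definition \ref{Def1} shows that $\OOO_{\PP(E)}(1)$ is big, that is, $E$ is big. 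The only delicate point is the ampleness of $\mathcal{A}$, for which I would cite the criterion that a $\pi$-ample line bundle twisted by a large pullback of an ample bundle from the base is ample; the remaining manipulations with sections are routine.
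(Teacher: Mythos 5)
Your proof is correct and follows essentially the same route as the paper's: the forward direction via Proposition \ref{main7} with $B\,=\,A^*$, and the converse by raising the section to a power on $\PP(E)$ and testing Definition \ref{Def1} against the ample line bundle $\OOO_{\PP(E)}(1)\otimes\pi^*A^{\otimes k}$ (the paper cites Hartshorne, Proposition 7.10(b), for its ampleness, exactly the relative-ampleness fact you invoke). The only cosmetic difference is that you spell out the bookkeeping identity $\OOO_{\PP(E)}(mk)\otimes\pi^*A^{-k}\,=\,\OOO_{\PP(E)}(mk+1)\otimes\mathcal{A}^*$, which the paper leaves implicit.
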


\begin{proof}
The condition 
$$H^0(X,\,S^m(E)\otimes A^*)\,\,\neq\,\, 0$$
is equivalent to the condition
$$H^0(\PP(E) ,\, \OOO_{\PP(E)}(m)\otimes \pi^*(A^*))\,\,\neq\,\, 0,$$
where $\pi$ as before is the projection of $\PP(E)$ to $X$.
If $s$ is a non-zero section of $\OOO_{\PP(E)}(m)\otimes \pi^*(A^*)$ then $s^j$ is a non-zero section of 
$\OOO_{\PP(E)}(mj)\otimes (\pi^* A^*)^j$ for all integer $j \,\geq\, 1.$ On the other hand, $\OOO_{\PP(E)}(1)\otimes \pi^*(A^{l})$ 
is ample on $\PP(E)$ for some integer $l \,>\, 0$ (see \cite {Ha} Proposition 7.10(b)). 
In view of Definition \ref{Def1}, this implies that $E$ is a big vector bundle.

Conversely, if $E$ is big, setting $B\,=\, A^*$ in Proposition \ref{main7} we obtained the required non-vanishing
result in the statement of the proposition.
\end{proof} 

\begin{proof}[{\bf Proof of Theorem \ref{main2}}]
Let $A$ be an ample line bundle on $X$. Since $E$ and $F$ are assumed to be big, there exist positive integers $m$ and $n$
such that
$$H^0( S^{m} (E)\otimes A^*)\,\,\neq\,\, 0$$ and
$$H^0(X,\, S^{n} (F)\otimes A^*)\,\,\neq\,\, 0.$$
The theorem follows immediately from Proposition \ref{main8} if we prove that
\begin{equation}\label{env}
H^0(X,\, S^t (E\otimes F )\otimes (A^s)^*)\,\,\neq\,\, 0
\end{equation}
for some integers $t,\,s \,\geq\, 1.$

Now, \eqref{env} is a consequence of the following:

\begin{lem}\label{main6}
Let $L$ be a line bundle  and $E$ and $F$ are vector bundles. Let $m_1, \,m_2$ be positive integers  such that $S^{m_1}(E)\otimes L$ and $S^{m_2}(F)\otimes L$
have nonzero sections. Then $S^{m_1 m_2}(E\otimes F)\otimes L^{m_1+m_2}$ has a nonzero section.
\end{lem}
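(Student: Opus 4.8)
The plan is to build the required section out of the two given ones by first raising them to powers that equalise their symmetric degrees, and then merging the results through the natural comparison homomorphism relating $S^{d}(E)\otimes S^{d}(F)$ and $S^{d}(E\otimes F)$, where $d\,=\,m_1 m_2$. Denote by $s$ and $t$ the given nonzero sections of $S^{m_1}(E)\otimes L$ and $S^{m_2}(F)\otimes L$. Regarding $s$ as a nonzero section of the line bundle $\OOO_{\PP(E)}(m_1)\otimes \pi^*L$ on the integral variety $\PP(E)$, its $m_2$-th power $s^{m_2}$ is a nonzero section of $\OOO_{\PP(E)}(m_1 m_2)\otimes \pi^*L^{m_2}$, hence a nonzero section of $S^{d}(E)\otimes L^{m_2}$; symmetrically, $t^{m_1}$ is a nonzero section of $S^{d}(F)\otimes L^{m_1}$. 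The point of this step is that both sections now live in symmetric degree $d$, and that the $L$-twists combine to the desired $L^{m_1+m_2}$.

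First I would form $s^{m_2}\otimes t^{m_1}$, a section of $S^{d}(E)\otimes S^{d}(F)\otimes L^{m_1+m_2}$, and check that it is nonzero. Since $X$ is irreducible and each factor is a nonzero section, their non-vanishing loci are nonempty open sets and therefore meet at some point $x$, where the section equals a tensor product of two nonzero vectors in the fibres and is thus nonzero.

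Next I would apply the natural $\mathbb C$-linear bundle homomorphism
$$\Phi\,:\,S^{d}(E)\otimes S^{d}(F)\,\longrightarrow\, S^{d}(E\otimes F),$$
defined as the composite of the symmetrisation inclusion $S^{d}(E)\otimes S^{d}(F)\hookrightarrow E^{\otimes d}\otimes F^{\otimes d}$, the canonical isomorphism $E^{\otimes d}\otimes F^{\otimes d}\,\cong\,(E\otimes F)^{\otimes d}$ interleaving the tensor factors, and the symmetrisation projection $(E\otimes F)^{\otimes d}\twoheadrightarrow S^{d}(E\otimes F)$. Tensoring $\Phi$ with $L^{m_1+m_2}$ and evaluating on $s^{m_2}\otimes t^{m_1}$ produces a section of $S^{d}(E\otimes F)\otimes L^{m_1+m_2}\,=\,S^{m_1 m_2}(E\otimes F)\otimes L^{m_1+m_2}$, which is exactly the bundle in the statement.

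The main obstacle is to guarantee that $\Phi$ does not annihilate the section just constructed; equivalently, that $\Phi$ is injective on the fibre over $x$. Over $\mathbb C$ this is a purely representation-theoretic fact, supplied by the Cauchy decomposition
$$S^{d}(V\otimes W)\,=\,\bigoplus_{\lambda\,\vdash\, d}\mathbb{S}_{\lambda}(V)\otimes \mathbb{S}_{\lambda}(W),$$
in which the summand indexed by the one-row partition $\lambda=(d)$ is precisely $S^{d}(V)\otimes S^{d}(W)$, and it occurs with multiplicity one. Since $\Phi$ is $\mathrm{GL}(V)\times \mathrm{GL}(W)$-equivariant and nonzero (for instance $v^{d}\otimes w^{d}$ maps to a nonzero multiple of $(v\otimes w)^{d}$), it must be a scalar multiple of the inclusion of this multiplicity-one summand, and hence injective on every fibre. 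In particular $\Phi$ is injective at $x$, so $\Phi(s^{m_2}\otimes t^{m_1})$ is nonzero there, yielding the desired nonzero section of $S^{m_1 m_2}(E\otimes F)\otimes L^{m_1+m_2}$ and completing the proof.
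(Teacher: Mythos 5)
Your proof is correct, but its decisive step is genuinely different from the paper's. Both arguments build the same candidate section: raise the two given sections to powers so that both sit in symmetric degree $d=m_1m_2$ with the twist $L^{m_1+m_2}$, tensor them, and push the result into $S^{m_1m_2}(E\otimes F)\otimes L^{m_1+m_2}$ through the chain (symmetric-tensor inclusion) $\to$ (interleaving isomorphism) $\to$ (symmetrisation projection). The difference lies in how non-vanishing of the image is certified. The paper uses an explicit polarization argument: at a point $x$ where both sections are nonzero (like you, it needs irreducibility of $X$ here, though it leaves that implicit), it chooses covectors $v_i,\,w_i$ with $\langle v_i^{\otimes m_i}\otimes w_i,\, s_i(x)\rangle\neq 0$ and then evaluates the image section against the pure functional $(v_1\otimes v_2)^{\otimes m_1m_2}\otimes w_1^{m_2}\otimes w_2^{m_1}$, which factors the pairing into a product of nonzero scalars. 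This is elementary and completely self-contained: it never needs to know anything about the map $S^d(E)\otimes S^d(F)\to S^d(E\otimes F)$ beyond its effect on one functional. You instead prove the stronger, structural statement that this comparison map $\Phi$ is fiberwise injective, by identifying it (via equivariance, the Cauchy decomposition $S^d(V\otimes W)=\bigoplus_{\lambda\vdash d}\mathbb{S}_\lambda(V)\otimes\mathbb{S}_\lambda(W)$, multiplicity one of $\lambda=(d)$, and Schur's lemma) with the inclusion of a multiplicity-one summand. That buys a reusable bundle-level fact — injectivity of $\Phi$ everywhere, not just non-vanishing of one section — at the cost of importing $\mathrm{GL}\times\mathrm{GL}$ representation theory, where the paper's pairing computation stays at the level of multilinear algebra. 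A further small divergence: you perform the powering-up step via $\OOO_{\PP(E)}(1)$ on the integral variety $\PP(E)$ (incidentally the same trick the paper uses in its proof of Theorem \ref{main3}), whereas the paper takes tensor powers $s_1^{\otimes m_2}$ inside $E^{\otimes m_1m_2}\otimes L^{m_2}$; both are valid, and in characteristic zero they produce the same section up to a nonzero scalar.
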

 
\begin{proof}
Let $s_1$ (respectively, $s_2$) be a non-zero section of $S^{m_1}(E)\otimes L$ (respectively, $S^{m_2}(F)\otimes L$). 
By the usual polarization argument, there are vectors $v_{1}\,\in \,E_{x}^*$ and $w_{1}\,\in\, L_{x}^*$
(respectively, $v_{2}\,\in \,F_{x}^*$ and $w_{2}\,\in\, L_{x}^*$)
such that $$\langle v_i^{\otimes m_i}\otimes w_i,\, s_i(x)\rangle\,\,\neq\,\, 0$$ for $i\,=\,1,\,2.$ There are the
standard natural maps
$$S^{m_1}(E)\otimes L\,\,\,\hookrightarrow\,\,\, E^{\otimes m_1}\otimes L$$
and
$$S^{m_2}(F)\otimes L\,\,\,\hookrightarrow\,\,\, F^{\otimes m_2}\otimes L,$$
and isomorphisms 
$$(E^{\otimes m_1}\otimes L)^{\otimes m_{2}} \,\,\,\simeq\,\,\, E^{\otimes m_1m_2}\otimes L^{m_{2}},$$
$$(F^{\otimes m_2}\otimes L)^{\otimes m_{1}} \,\,\,\simeq\,\,\, F^{\otimes m_1m_2}\otimes L^{m_{1}},$$
$$(E^{\otimes m_1m_2}\otimes L^{m_{2}})\otimes (F^{\otimes m_1m_2}\otimes L^{m_{1}})\,\simeq\,
(E\otimes F)^{\otimes m_1m_2}\otimes L^{m_{2}}\otimes L^{m_{1}},$$
and also there is the surjective map
$$(E\otimes F)^{\otimes m_1m_2}\otimes L^{m_{1}+m_{2}}\,\,\twoheadrightarrow\,\, S^{m_1m_2}(E\otimes F)\otimes L^{m_{1}+m_{2}}.$$
Applying these maps in sequence to $s_1,\, s_2$ we get a section $s$ of 
$$S^{m_1m_2}(E\otimes F)\otimes L^{m_1+m_2}.$$ It suffices to show that
\begin{equation}\label{es}
s\,\,\,\neq\,\,\, 0.
\end{equation}
Note that \eqref{es} holds because
$$\langle (v_1\otimes v_2)^{\otimes m_1m_2}\otimes w_1^{m_2}\otimes w_2^{m_1},\, s(x)\rangle \,\,= $$
$$\langle v_1^{\otimes m_1}\otimes w_1,\, s_1(x)\rangle^{m_2} \langle v_2^{\otimes m_2}\otimes w_2,\, s_2(x)\rangle^{m_1} \,\neq\, 0.$$
This completes the proof of the lemma.
\end{proof}

As noted before, Lemma \ref{main6} completes the proof of Theorem \ref{main2}.
\end{proof}

\begin{proof}[{\bf Proof of the Theorem \ref{main3}:}]
Since $E$ is big, there is an ample line bundle $A$ on $X$ and an integer $j \,>\, 0$, such that
$S^j(E)\otimes A^*$ has a non-zero section $s$ (see Proposition \ref{main8}). Then $s^m$ is a non-vanishing section of $S^{mj}(E)
\otimes (A^*)^m$. Since the natural map from $S^{mj}(E)\otimes (A^*)^m$ to $S^j(S^m(E))\otimes (A^*)^m$ is injective, it follows
that $S^j(S^m(E))\otimes (A^*)^m$ has a non-zero section. In view of Proposition \ref{main8}, this implies that $S^m(E)$ is big. 
\end{proof}

\begin{proof}[{\bf Proof of Theorem \ref{main4}}]
Assume that $E$ is big. Let $A$ be an ample line bundle on $X$ such that
\begin{equation}\label{j1}
H^0(X,\,S^m(E)\otimes A^*)\,\,\neq\,\, 0
\end{equation}
for some $m\,>\,0$. Let $$f\,\,:\,\,\widetilde{X}\,\,\longrightarrow\,\, X$$ be a finite ramified covering map such that
there is a line bundle $\widetilde A$ on $\widetilde X$ for which $f^*A\,=\,{\widetilde A}^{2m}$  {\cite[Theorem 4.1.10 (Bloch-Gieseker covering)]{La1}}.
Then \eqref{j1} implies
that $$H^0(X,\,S^m(f^*(E)\otimes \widetilde{A}^*)\otimes (\widetilde{A}^*)^m) \,\,\neq\,\, 0.$$
Thus the vector bundle $f^*(E)\otimes \widetilde{A}^*$ is big. On the other hand, since the vector
bundle $F$ in Theorem \ref{main4} is nef, and $\widetilde{A}$ is
ample, we conclude that $\widetilde{A}\otimes f^*F$ is ample, and hence it is big. Since $f^*(E)\otimes \widetilde{A}^*$ and
$\widetilde{A}\otimes f^*F$ are big, Theorem \ref{main2} says that
$$
f^*(E)\otimes \widetilde{A}^*\otimes\widetilde{A}\otimes f^*F\,\,=\,\, f^*(E\otimes F)
$$ is also big.

The tensor product of nef vector bundles is nef (see
\cite[Theorem 6.2.12]{La2}), and the pullback of a nef vector bundle --- under a surjective 
morphism --- is nef, we conclude that $f^*(E\otimes F)$ is nef. Hence
$f^*(E\otimes F)$ is nef and big. If the pullback $\Phi^*W$ of a vector bundle $W$ by a finite surjective morphism
$\Phi$ is nef, then $W$ is nef. Consequently, $E\otimes F$ is nef. Also, the pullback by  finite covers does not change the sign of
the Segre class, so that $E\otimes F$ is nef and big (see Theorem \ref{Rem2a}). 
\end{proof}

\begin{rem}\label{Rem3}
In \cite{Sch} it is shown that tensor product of a nef line bundle with a nef vector bundle is nef and big, provided one of
them is also big. Hence Theorem \ref{main4} is a generalization of Schneider's result.
\end{rem}

{\bf Acknowledgments}:  \  This work was supported in part by the Labex CEMPI  (ANR-11-LABX-0007-01).  We thank the referee for his
suggestions.

\pagebreak

{\bf  Data Availability Statements}

{\bf No data is used.}

\end{document}